\newtheorem{thm}{Theorem}[section]
\newtheorem{lem}[thm]{Lemma}
\newtheorem{obs}[thm]{Observation}
\newtheorem{prop}[thm]{Proposition}
\newtheorem{alg}[thm]{Algorithm}
\newcommand{\ob}{\overline{b}}
\newcommand{\oc}{\overline{c}}
\newcommand{\ga}{\mathfrak{a}}
\newcommand{\gb}{\mathfrak{b}}
\newcommand{\gc}{\mathfrak{c}}
\renewcommand{\gg}{\mathfrak{g}}
\newcommand{\gh}{\mathfrak{h}}
\newcommand{\gp}{\mathfrak{p}}
\newcommand{\gq}{\mathfrak{q}}
\newcommand{\gu}{\mathfrak{u}}
\newcommand{\gv}{\mathfrak{v}}
\newcommand{\kk}{\Bbbk}
\newcommand{\FF}{\mathbb{F}}
\newcommand{\QQ}{\mathbb{Q}}
\newcommand{\ZZ}{\mathbb{Z}}
\newcommand{\onto}{\twoheadrightarrow}
\newcommand{\iso}{\xrightarrow{\;\sim\;}}
\newcommand{\algcl}[1]{\overline{#1}}
\newcommand{\kkXY}{\kk[X,Y]}
\newcommand{\st}{\;|\ }
\newcommand{\divides}{\mid}
\newcommand{\ideal}{\vartriangleleft}
\newcommand{\Ideal}[1]{\left\langle #1\right\rangle}
\newcommand{\tIdeal}[1]{\bigl\langle #1\bigr\rangle}
\newcommand{\card}[1]{|#1|}
\DeclareMathOperator{\lm}{lm}
\DeclareMathOperator{\ord}{ord}
\DeclareMathOperator{\rad}{rad}
\def\clap#1{\hbox to 0pt{\hss#1\hss}}
\def\mathclap{\mathpalette\mathclapinternal}
\def\mathclapinternal#1#2{\clap{$\mathsurround=0pt#1{#2}$}}
\newenvironment{poc}{\begin{proof}[Proof of correctness]}{\end{proof}}
\newcommand{\term}[1]{\emph{#1}}
\algnewcommand{\LineComment}[1]{\Statex // #1}
\title[Intrinsic factorization of ideals in Dedekind domains]{Intrinsic factorization of ideals\\in Dedekind domains}
\author{Mawunyo Kofi Darkey-Mensah \and Przemysław Koprowski}
\email{mdarkeymensah@gmail.com}
\email{przemyslaw.koprowski@us.edu.pl}
\begin{document}
\begin{abstract}
We present a generalization of a polynomial factorization algorithm that works with ideals in maximal orders of global function fields. The method presented in this paper is intrinsic in the sense that it does not depend on the embedding of the ring of polynomials into the Dedekind domain in question. 
\end{abstract}
\keywords{Dedekind domain, ideal factorization, polynomial factorization, algorithm, square-free decomposition}
\subjclass[2010]{primary: 11Y40; secondary: 14H05, 14Q05, 13F05, 13P05, 12Y05}
\maketitle

\section{Introduction}
Let~$R$ be a Dedekind domain. A fundamental and well known property of Dedekind domains is that every ideal $\ga\ideal R$ has a unique factorization into a product of powers of prime ideals. There are cases when this factorization is algorithmically computable. For instance, if $R = \ZZ_K$ is the ring of algebraic integers (i.e. the integral closure of~$\ZZ$) in some algebraic number field $K = \QQ(\vartheta)$, then a suitable algorithm can be found e.g. in \cite[Algorithm 2.3.22]{Cohen00} or \cite[\S2.2]{GMN13}. The algorithms can be adapted also to global function fields. They depend however on knowing an embedding of the ring of integers (or polynomials) into~$R$. In this paper we discuss the problem of performing the computations intrinsically in the monoid of $R$-ideals without relaying on these embeddings. The procedure of factoring ideals, that we propose, resembles a method of factoring polynomials over finite fields. We show how to generalize known algorithms for polynomial factorization to make them work with ideals in maximal orders of global function fields. The ideal to be factored passes through a three-stage process: radical decomposition, distinct degree factorization and equal degree factorization.  

The algorithms presented in \cite{Cohen00, GMN13} are quite efficient, hence the aim of developing intrinsic methods is not so much to reduce the computation time but rather to construct algorithm that do not dependent on the particular structure of global fields and so have potential to be generalized to other rings. In particular the first step of the process, namely the radical decomposition, can be performed in \emph{any} Dedekind domain in which three elementary operation on ideals are computable. This class of rings include coordinate rings of  smooth, algebraically irreducible curves over a computable, perfect field (see Proposition~\ref{prop_coord_is_computable}). Some early experiments of the authors suggest that the algorithms presented here can be generalized to compute primary decomposition of ideals in affine algebras. This subject need further investigation, though.

The paper is organized as follows. In Section~\ref{sec_radical} we discuss the radical decomposition of ideals, which is an analog of a square-free factorization of polynomials. Given an ideal $\ga\ideal R$, this procedure produces a list of radical ideals $\gg_1, \dotsc, \gg_m$ such that~$\ga$ is a product of their respective powers. Next, in Section~\ref{sec_distinct} we show how to factor a radical ideal (i.e. any of the ideals $\gg_1, \dotsc, \gg_m$) into a product of (radical) ideals such that each one of these new ideals is a product of primes of the same residual degree. Finally in Section~\ref{sec_equal} we present a variant of Cantor--Zassenhaus algorithm (Algorithm~\ref{alg_ed_factor}) capable of factoring radical ideals with prime divisors of a fixed degree. The algorithms discussed in this paper were implemented by the authors in a computer algebra system Magma \cite{magma}. In the closing section we presented two examples obtained with our implementation. 

In the whole paper the letter~$R$ always denotes a (fixed) Dedekind domain with a field of fractions~$K$. For readers convenience our notation follows the one used in \cite{AM16}, in particular fraktur letters are used to denote ideals. All the ideals in this paper are integral ideals.

\section{Radical decomposition of ideals}\label{sec_radical}
Let~$R$ be a Dedekind domain and $\ga\ideal R$ be an ideal in~$R$. Assume that~$\ga$ factors into primes as
\begin{equation}\label{eq_factor_a}
\ga = \gp_1^{k_1}\dotsm \gp_s^{k_s},
\end{equation}
where $\gp_1, \dotsc, \gp_s$ are distinct (and unknown) prime ideals and $k_1, \dotsc, k_s > 0$ their multiplicities. Collate the factors of equal multiplicities. For any $j\leq m := \max\{ k_1, \dotsc, k_s \}$ denote
\[
\gg_j := \bigcap_{\substack{1\leq i\leq s\\ k_i = j}} \gp_i.
\]
This way we may write~$\ga$ as a product analogous to a square-free factorization of a polynomial:
\begin{equation}\label{eq_radical_decomp}
\ga = \gg_1\cdot \gg_2^2\dotsm \gg_m^m.
\end{equation}
We shall call~\eqref{eq_radical_decomp} the \term{radical decomposition} of the ideal~$\ga$. The name is justified by the following observation.

\begin{obs}
Ideals $\gg_1, \dotsc, \gg_m$ are radical.
\end{obs}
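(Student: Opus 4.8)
The plan is to show each $\gg_j$ equals its own radical by using the definition $\gg_j = \bigcap_{k_i = j} \gp_i$ together with the basic structure theory of Dedekind domains. First I would recall that a finite intersection of prime ideals in any commutative ring is a radical ideal: indeed $\rad(\gp_{i_1}\cap\dotsb\cap\gp_{i_t}) = \rad(\gp_{i_1})\cap\dotsb\cap\rad(\gp_{i_t}) = \gp_{i_1}\cap\dotsb\cap\gp_{i_t}$, since each $\gp_i$ is prime hence radical, and the radical commutes with finite intersections. Applying this with the index set $\{\,i : k_i = j\,\}$ immediately gives that $\gg_j$ is radical.

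Alternatively, and perhaps more in the spirit of the Dedekind-domain setting, I would observe that in a Dedekind domain distinct primes are comaximal, so by the Chinese Remainder Theorem the intersection $\bigcap_{k_i=j}\gp_i$ coincides with the product $\prod_{k_i=j}\gp_i$, and a product of distinct primes is exactly an ideal whose prime factorization has all exponents equal to $1$ — which is precisely the characterization of a (nonzero) radical ideal in a Dedekind domain. Either route is short; I would pick the first since it needs no special properties of $R$ and matches the paper's stated goal of keeping the radical step valid in arbitrary Dedekind domains.

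There is really no serious obstacle here: the only thing to be slightly careful about is the degenerate cases, namely when the set $\{\,i : k_i = j\,\}$ is empty (in which the convention is $\gg_j = R$, which is trivially radical as $\rad(R) = R$) and the edge case $\ga = (0)$ or $\ga = R$, where the factorization \eqref{eq_factor_a} is empty and there is nothing to prove. I would dispatch these in a single sentence and present the main argument as the one-line computation with the radical of an intersection.
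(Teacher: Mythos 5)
Your first argument — that the radical commutes with finite intersections and each prime is its own radical — is exactly the paper's proof, and it is correct. The alternative route via comaximality and the exponent-one characterization is also valid but unnecessary; the paper uses the first, more general argument.
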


Indeed, radicals are preserved by intersection (see e.g. \cite[Ch.~1]{AM16}), hence
\[
\rad(\gg_j) 
= \rad\Bigl( \bigcap_{k_i = j}\gp_i \Bigr)
= \bigcap_{k_i = j} \rad(\gp_i)
= \bigcap_{k_i = j} \gp_i
= \gg_j.
\]
In our settings, the ideals $\gg_1, \dotsc, \gg_m$ play roles analogous to square-free factors of a polynomial in case of the square-free factorization, so that we shall call them \emph{radical factors} of~$\ga$.

The following operations are the basic building blocks for our first algorithm:
\begin{itemize}
\item given an ideal~$\ga$ compute its radical $\rad\ga$,
\item given two ideals~$\ga$ and~$\gb$ compute their sum $\ga+\gb$ and the colon ideal $(\ga:\gb) = \{ x\st x\gb\subseteq \ga\}$.
\end{itemize}
We shall say that~$R$ is a ring with \term{computable ideal arithmetic} if all the three operations are computable for ideals of~$R$.

\begin{prop}\label{prop_coord_is_computable}
Let~$\kk$ be a perfect, computable field and $C := \{ F = 0\}$ be a smooth, geometrically irreducible algebraic curve over~$\kk$, defined by a bivariate polynomial $F\in \kkXY$. Then the coordinate ring $R = \kk[C] = \sfrac{\kkXY}{\Ideal{F}}$ admits computable ideal arithmetic.
\end{prop}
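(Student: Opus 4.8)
The plan is to exhibit concrete algorithms for the three required operations — computing $\rad\ga$, the sum $\ga+\gb$, and the colon ideal $(\ga:\gb)$ — working entirely inside the quotient ring $R = \kkXY/\Ideal{F}$. The key observation is that Gröbner basis theory over the computable, perfect field $\kk$ gives us effective ideal arithmetic in $\kkXY$, and that ideals of $R$ correspond bijectively to ideals of $\kkXY$ containing $\Ideal{F}$; one passes back and forth by taking images and preimages, both of which are Gröbner-basis computations. So each of the three operations will be reduced to a standard constructive-commutative-algebra routine in the polynomial ring, then pushed down to $R$.

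First I would handle the sum and the colon ideal, which are the easy cases. If $\ga = \Ideal{f_1,\dotsc,f_r}$ and $\gb = \Ideal{g_1,\dotsc,g_t}$ are given by generators in $R$, lift them to polynomials; then $\ga+\gb$ is represented by the union of the two generating sets together with $F$, and a Gröbner basis computation (with respect to any term order) produces a canonical form. For the colon ideal, the standard formula $(\ga:\gb) = \bigcap_i (\ga : \Ideal{g_i})$ reduces the problem to colons by principal ideals, and $(\ga : \Ideal{g})$ is computed via an elimination/syzygy computation: intersect $\ga$ with $\Ideal{g}$ using the classical trick $t\cdot\ga + (1-t)\cdot\Ideal{g}$ in $\kk[X,Y,t]$, eliminate $t$, then divide the resulting generators by $g$ (possible since they are multiples of $g$ in the domain $R$ — here geometric irreducibility of $C$, hence that $R$ is a domain, is used). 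Finite intersections of ideals are again elimination computations. All of this takes place in $\kkXY$ and then descends to $R$ by adjoining $F$; correctness is the usual correspondence between ideals in a quotient and ideals containing the modulus.

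The main obstacle is the radical. Over a general computable field, radical computation is subtle, but here we are in a very favorable situation: $R$ is the coordinate ring of a smooth curve, so it is a one-dimensional regular domain, i.e. a Dedekind domain, and in particular every nonzero prime is maximal. The approach I would take is to reduce to radicals of zero-dimensional ideals, where Seidenberg's lemma applies. Concretely, given $\ga\ideal R$, the quotient $R/\ga$ is a zero-dimensional (Artinian) $\kk$-algebra; lifting to $\kkXY$, the ideal $\ga' := \Ideal{F, f_1,\dotsc,f_r}$ is zero-dimensional in $\kkXY$ (its variety is the finite set of points of $C$ lying on $\ga$). For a zero-dimensional ideal over a perfect field, one computes the radical by the Seidenberg construction: for each variable $Z\in\{X,Y\}$ find the monic generator $p_Z$ of the elimination ideal $\ga'\cap\kk[Z]$, replace it by its squarefree part $\rad(p_Z)=p_Z/\gcd(p_Z,p_Z')$, and adjoin these squarefree parts to $\ga'$; perfectness of $\kk$ is exactly what makes this yield the full radical. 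Pushing the result back down to $R$ gives $\rad\ga$. I expect the verification that $\ga'$ is zero-dimensional and that the Seidenberg output, once reduced modulo $F$, really equals $\rad\ga$ in $R$ to be the technically delicate part; everything else is bookkeeping with Gröbner bases.

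Finally I would note that the hypotheses are used as follows: computability and perfectness of $\kk$ make Gröbner bases and the squarefree-part/Seidenberg steps effective; geometric irreducibility guarantees $R$ is a domain (needed for the division step in the colon computation and for cleanliness throughout); and smoothness, while not strictly needed for mere computability of the three operations, is what makes $R$ a genuine Dedekind domain so that the proposition feeds correctly into the radical-decomposition algorithm of this section.
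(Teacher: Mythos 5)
Your proposal is correct and follows essentially the same route as the paper: reduce all three operations to $\kkXY$ via the contraction/extension correspondence along $\kappa:\kkXY\onto R$, compute sums by concatenating generators and colons by the standard polynomial-ring quotient algorithm, and obtain the radical by noting that the contraction of a nonzero ideal is zero-dimensional and applying Seidenberg's method over the perfect field $\kk$. The only difference is one of detail, not of substance — you spell out the elimination-based colon computation that the paper simply cites as well known.
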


The proof of the proposition needs to be preceded by a lemma. Let $\kappa: \kkXY\onto R$ be the canonical epimorphism. By superscripts $\cdot^c$, $\cdot^e$ we shall denote respectively the ideal contraction and extension with respect to~$\kappa$. 

\begin{lem}
Keep the assumptions of the proposition. If $\ga, \gb\ideal R$ are two ideals, then
\[
\ga^{ce} = \ga,\qquad 
\rad(\ga) = \bigl( \rad(\ga^c)\bigr)^e,\qquad
(\ga : \gb) = ( \ga^c : \gb^c)^e.
\]
\end{lem}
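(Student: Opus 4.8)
The plan is to establish each of the three identities by unwinding the definitions of contraction and extension along the surjection $\kappa\colon \kkXY \onto R$, using two standard facts about such maps. First, since $\kappa$ is surjective, extension and contraction set up a bijection between ideals of $R$ and ideals of $\kkXY$ that contain $\ker\kappa = \Ideal{F}$; in particular $\ga^{ce} = \ga$ for every ideal $\ga\ideal R$, which is the first claim and which I would quote from \cite[Ch.~1]{AM16} (it is the statement that $\cdot^c$ is injective on ideals of $R$, with $\cdot^e$ as a one-sided inverse). Second, I would record that for a surjection, extension commutes with the formation of radicals and of colon ideals on those ideals that contain $\ker\kappa$; equivalently, contraction commutes with radicals and colons unconditionally. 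Both of these are elementary but it is cleanest to prove them directly here rather than cite them.

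For the radical identity, I would argue $\rad(\ga)^c = \rad(\ga^c)$ first: an element $x\in\kkXY$ lies in $\rad(\ga^c)$ iff $x^n\in\ga^c$ for some $n$, iff $\kappa(x)^n = \kappa(x^n)\in\ga$ for some $n$, iff $\kappa(x)\in\rad(\ga)$, iff $x\in\rad(\ga)^c$. Applying $\cdot^e$ to both sides and using $\rad(\ga)^{ce} = \rad(\ga)$ (valid because $\rad(\ga)$ is an ideal of $R$, so the first identity applies to it) gives $\rad(\ga) = \bigl(\rad(\ga^c)\bigr)^e$. The colon identity goes the same way: I would show $(\ga:\gb)^c = (\ga^c : \gb^c)$ by a direct element chase — note that $\gb^c$ is generated over $\kkXY$ by any preimages of a generating set of $\gb$ together with $F$, and that $\kappa\bigl((\ga:\gb)^c\bigr)\gb = \kappa\bigl((\ga:\gb)^c \cdot \gb^c\bigr) \subseteq \kappa(\ga^c) = \ga$ shows ``$\subseteq$'', while for ``$\supseteq$'' one lifts $x\in(\ga^c:\gb^c)$, observes $xF\in\Ideal{F}\subseteq\ga^c$ is automatic, and deduces $\kappa(x)\gb = \kappa(x\gb^c)\subseteq\kappa(\ga^c)=\ga$ — and then apply $\cdot^e$ together with $(\ga:\gb)^{ce}=(\ga:\gb)$.

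I do not expect a serious obstacle here; the only point that needs genuine care is keeping track of which ideals contain $\ker\kappa = \Ideal{F}$, since the round-trip identity $\gc^{ce}=\gc$ for ideals $\gc\ideal\kkXY$ holds precisely for those $\gc\supseteq\Ideal{F}$. Every ideal appearing on the left-hand sides — namely $\ga$, $\rad(\ga)$, and $(\ga:\gb)$ — is genuinely an ideal of $R$, so the safe direction of the round trip ($\ga^{ce}=\ga$ for $\ga\ideal R$) is all that is ever invoked, and the argument goes through cleanly. The remaining (unstated) work is to see why these three identities suffice for Proposition~\ref{prop_coord_is_computable}: once one knows a Gröbner-basis engine over $\kkXY$ can compute sums, radicals and colons of ideals containing $F$, the lemma transports those computations to $R$ via a fixed choice of lifts and the single extra generator $F$ — but that deduction belongs to the proof of the proposition, not of the lemma.
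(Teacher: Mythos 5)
Your proof is correct and follows essentially the same route as the paper: establish $\ga^{ce}=\ga$ from surjectivity of $\kappa$, then transport the radical and the colon ideal by showing they commute with contraction and applying $\cdot^e$. The paper merely cites \cite{AM16} for the commutation facts you verify by hand, and phrases the colon identity as a chain of inclusions rather than an element chase, but the substance is identical.
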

 
\begin{proof}
The inclusion $\ga^{ce}\subseteq \ga$ holds always (see e.g. \cite[Proposition 1.17]{AM16}). The other inclusion follows from the fact that~$\kappa$ is an epimorphism. Consequently we have
\[
\rad(\ga) 
= \bigl( \rad(\ga) \bigr)^{ce}
= \bigl( \rad(\ga^c) \bigr)^e,
\]
where the last equality follows from \cite[Exercise~1.18]{AM16}. Likewise we may write
\[
( \ga^c : \gb^c )^e
\subseteq ( \ga^{ce} : \gb^{ce} )
= ( \ga : \gb )
= ( \ga : \gb )^{ce}
\subseteq ( \ga^c : \gb^c )^e.
\]
This concludes the proof.
\end{proof}

\begin{proof}[Proof of Proposition~\ref{prop_coord_is_computable}]
If we do not insist on obtaining the $2$-generators representation of the result, the computation of the sum $\ga+ \gb$ of two ideals can be as simple as a concatenation of their lists of generators. Next, an algorithm for computing a quotient of two ideal in a multivariate polynomial ring is well known and so it follows from the above lemma that one may compute the quotient of ideals in~$R$. Finally, being a Dedekind domain, the ring~$R$ has dimension one. Consequently every nontrivial ideal $\ga\ideal R$ lifts to a zero-dimensional ideal $A\ideal \kkXY$. The radical of a zero-dimensional ideal in a multivariate polynomial ring over a perfect field is computable using Seidenberg's formula (see \cite{Seidenberg74}). Thus, the radical of~$\ga$ is computable, as well by the previous lemma.
\end{proof}

We are now ready to present an algorithm for the radical decomposition. The reader may wish to observe that it is a generalization of Musser's algorithm \cite{Musser71} for the square-free factorization of polynomials over a field of characteristic zero.

\begin{alg}[Radical decomposition of an ideal]\label{alg_radical_decomp}
\mbox{}\\
\textbf{Input:} an ideal~$\ga$ in a Dedekind domain~$R$ with computable ideal arithmetic.\\
\textbf{Output:} radical factors $\gg_1, \dotsc, \gg_m$ of~$\ga$.
\begin{algorithmic}[1]
\LineComment{Initialization}
\State $\ga_0 \gets \ga$
\State $i     \gets 1$
\State $\gb_1 \gets \rad(\ga)$
\State $\ga_1 \gets (\ga_0:\gb_1)$
\LineComment{Main loop}
\While{$\gb_i\neq R$}
	\State $\gb_{i+1} \gets \ga_i + \gb_i$
	\State $\ga_{i+1} \gets (\ga_i : \gb_{i+1})$
	\State $\gg_i     \gets (\gb_i : \gb_{i+1})$
	\State $i \gets i + 1$
\EndWhile
\State \Return $\gg_1, \dotsc, \gg_i$
\end{algorithmic}
\end{alg}

Before we show the correctness of the algorithm, we present a slightly technical lemma that gives an explicit description of ideals~$\ga_i$ and~$\gb_i$ constructed during the execution of the algorithm.

\begin{lem}
Keep the notation as in Algorithm~\ref{alg_radical_decomp}. The ideals~$\ga_i$ and~$\gb_i$ satisfy:
\[
\ga_i = \gg_{i+1}\cdot \gg_{i+2}^2\dotsm \gg_m^{m-i}
\qquad\text{and}\qquad
\gb_i = \gg_i\cdot \gg_{i+1}\dotsm \gg_m.
\]
\end{lem}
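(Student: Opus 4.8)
The plan is to establish both identities simultaneously by induction on the loop index~$i$, after translating each of the three ideal operations into the valuation calculus available in a Dedekind domain; here $\gg_1,\dotsc,\gg_m$ denote the radical factors of~$\ga$ from the radical decomposition~\eqref{eq_radical_decomp}. Recall that a nonzero ideal~$\gc\ideal R$ is determined by its finite family of exponents $v_\gp(\gc)$, one for each prime $\gp\ideal R$, and that these obey $v_\gp(\gc\mathfrak{d})=v_\gp(\gc)+v_\gp(\mathfrak{d})$, $v_\gp(\gc+\mathfrak{d})=\min\{v_\gp(\gc),v_\gp(\mathfrak{d})\}$, $v_\gp(\rad\gc)=\min\{v_\gp(\gc),1\}$ and $v_\gp\bigl((\gc:\mathfrak{d})\bigr)=\max\{v_\gp(\gc)-v_\gp(\mathfrak{d}),0\}$, the last being a consequence of the identity $(\gc:\mathfrak{d})=\gc\mathfrak{d}^{-1}\cap R$ in the group of fractional ideals (or of localizing at~$\gp$). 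Two preliminary remarks will be used throughout. First, distinct nonzero primes of~$R$ are comaximal, so each radical factor is in fact a \emph{product} of primes; hence a prime~$\gp$ occurring in~$\ga$ with multiplicity~$k$ (that is, $v_\gp(\ga)=k>0$) has $v_\gp(\gg_k)=1$ and $v_\gp(\gg_j)=0$ for $j\neq k$, whereas $v_\gp(\gg_j)=0$ for all~$j$ if $\gp\nmid\ga$. Second, I use the convention that a product over the empty index set equals~$R$; with it the formula for~$\gb_i$ is meaningful for $i\geq 1$ and that for~$\ga_i$ for $i\geq 0$, and both are consistent with the termination test, since $\gb_i=\gg_i\dotsm\gg_m\subseteq\gg_m\subsetneq R$ whenever $1\leq i\leq m$ while $\gb_i=R$ once $i>m$.

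Granting this dictionary the verification is a routine exponent computation. For the base of the induction, $\ga_0=\ga=\gg_1\gg_2^2\dotsm\gg_m^m$ is the radical decomposition itself; applying the radical rule prime by prime gives $\gb_1=\rad(\ga)=\gg_1\gg_2\dotsm\gg_m$; and for a prime~$\gp$ of multiplicity~$k$ one reads $v_\gp(\ga_1)=v_\gp\bigl((\ga_0:\gb_1)\bigr)=\max\{k-1,0\}$, which is exactly the exponent of~$\gp$ in $\gg_2\gg_3^2\dotsm\gg_m^{m-1}$. For the inductive step fix $i\geq 1$ and assume $\ga_i=\gg_{i+1}\gg_{i+2}^2\dotsm\gg_m^{m-i}$ and $\gb_i=\gg_i\gg_{i+1}\dotsm\gg_m$; for a prime~$\gp$ of multiplicity~$k$ this says $v_\gp(\ga_i)=\max\{k-i,0\}$ and $v_\gp(\gb_i)=1$ if $k\geq i$, else~$0$. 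Then $v_\gp(\gb_{i+1})=\min\{v_\gp(\ga_i),v_\gp(\gb_i)\}$ equals~$1$ when $k\geq i+1$ and~$0$ otherwise, i.e. $\gb_{i+1}=\gg_{i+1}\gg_{i+2}\dotsm\gg_m$; and $v_\gp(\ga_{i+1})=\max\{v_\gp(\ga_i)-v_\gp(\gb_{i+1}),0\}=\max\{k-(i+1),0\}$, i.e. $\ga_{i+1}=\gg_{i+2}\gg_{i+3}^2\dotsm\gg_m^{m-(i+1)}$, which closes the induction. As a byproduct, $v_\gp\bigl((\gb_i:\gb_{i+1})\bigr)=\max\{v_\gp(\gb_i)-v_\gp(\gb_{i+1}),0\}$ equals~$1$ precisely when $k=i$, so the ideal $\gg_i$ produced by Algorithm~\ref{alg_radical_decomp} is indeed the $i$-th radical factor — the statement that makes the algorithm correct.

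I do not foresee a genuine obstacle; the argument is bookkeeping built on the same min/max arithmetic of exponents that underlies Musser's square-free factorization. The two spots that deserve a moment's care are the valuation formula for the colon ideal (the only one of the three operations that is not immediately visual) and the indexing near the end of the loop, where successive factors collapse to the empty product~$R$; the stopping convention above takes care of the latter, and one should treat the boundary value $k=i$ on its own, since that is precisely the multiplicity at which~$\gp$ first drops out of~$\ga_i$ and then out of~$\gb_i$.
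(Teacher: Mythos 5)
Your proof is correct, and it takes a genuinely different route from the paper's. The paper argues element by element: the inclusion $\ga_i\subseteq\gg_{i+1}\gg_{i+2}^2\dotsm\gg_m^{m-i}$ is obtained by picking, via the strong approximation theorem, an auxiliary element $y\in\gb_i$ with $\ord_\gp y=1$ and reading off a valuation inequality for each $x$ in the colon ideal, and the identity for $\gb_{i+1}$ is derived by a somewhat laborious manipulation of unions and intersections of generating sets. You instead work entirely in the divisor calculus of a Dedekind domain, reducing all three operations (sum, radical, colon) to min/max arithmetic on the exponents $v_\gp$; the whole induction then collapses to the two lines $v_\gp(\gb_{i+1})=\min\{\max\{k-i,0\},[k\geq i]\}$ and $v_\gp(\ga_{i+1})=\max\{v_\gp(\ga_i)-v_\gp(\gb_{i+1}),0\}$. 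The only nontrivial input is the formula $v_\gp\bigl((\gc:\mathfrak{d})\bigr)=\max\{v_\gp(\gc)-v_\gp(\mathfrak{d}),0\}$, which you justify correctly via $(\gc:\mathfrak{d})=\gc\mathfrak{d}^{-1}\cap R$ (invertibility of nonzero fractional ideals); note that this single fact is exactly where the paper's strong-approximation argument is hiding, so the two proofs expend their effort in the same place but package it differently. Your version is shorter, makes the analogy with Musser's exponent bookkeeping for square-free factorization completely explicit, and as you observe also yields the correctness of the step $\gg_i=(\gb_i:\gb_{i+1})$ for free; the paper's version has the merit of staying within the elementary ideal operations it assumes computable and of not invoking the fractional-ideal group. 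One small caveat: your dictionary presumes $\ga\neq 0$ (so that all ideals in sight are invertible), which is implicit in the factorization~\eqref{eq_factor_a} but worth stating.
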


\begin{proof}
We proceed by induction. The assertion is trivially true for~$\ga_0$ and~$\gb_1$. Assume that the two formulas hold for ideals~$\ga_{i-1}$ and~$\gb_i$. Take any $x\in \gg_{i+1}\cdot \gg_{i+2}^2\dotsm \gg_m^{m-i}$ and $y\in \gb_i = \gg_i\cdot \gg_{i+1}\dotsm \gg_m$. Then their product~$xy$ lies in $\gg_i\cdot \gg_{i+1}^2\dotsm \gg_m^{m-i+1} = \ga_{i-1}$. Hence $x\in (\ga_{i-1} : \gb_i) = \ga_i$ proving an inclusion $\ga_i \supseteq \gg_{i+1}\cdot \gg_{i+2}^2\dotsm \gg_m^{m-i}$.

Conversely, take $x\in \ga_i = (\ga_{i-1} : \gb_i)$. Fix any prime ideal~$\gp$ dividing~$\ga_{i-1}$ and let $k := \ord_\gp(\ga)$ be the multiplicity of~$\gp$ in the factorization~\eqref{eq_factor_a} of~$\ga$. By the inductive hypothesis, the ideal~$\gp$ divides~$\gb_i$ and $k - i + 1$ is the multiplicity of~$\gp$ in the factorization of~$\ga_{i-1}$. By the strong approximation theorem (see e.g. \cite[Corollary~10.5.11]{Cohn03}) there exists an element $y\in R$ such that
\[
\ord_\gp y = 1\qquad\text{and}\qquad \ord_\gq y \geq 1\quad\text{for all }\gq\divides \gb_i.
\]
In particular, $y$ is an element of~$\gb_i$ and $y\notin \gp^2$. By the definition of the colon ideal, $xy\in x\cdot \gb_i\subseteq \ga_{i-1}\subseteq \gp^{k - i + 1}$. It follows that the $\gp$-adic valuation of the product~$xy$ is at least $k - i + 1$. Therefore, we have
\[
k - i + 1 
\leq \ord_\gp(xy)
= \ord_\gp x + 1.
\]
Consequently $\ord_\gp x\geq k - i$, which means that $x\in \gp^{k - i}$. As this holds for every prime~$\gp$ dividing~$\ga_{i-1}$, we see that
\[
x\in \bigcap_{\substack{\gp\divides \ga\\ \mathclap{\ord_\gp(\ga)\geq i}}} \gp^{\ord_\gp(\ga) - i}
= \prod_{\substack{\gp\divides \ga\\ \mathclap{\ord_\gp(\ga)\geq i}}} \gp^{\ord_\gp(\ga) - i}
= \prod_{k\geq i}\biggl( \prod_{\substack{\gp\divides \ga\\ \mathclap{\ord_\gp(\ga)=k}}} \gp\biggr)^{k - i} 
\hspace{-1em}= R\cdot \gg_{i+1}\cdot \gg_{i+2}^2\dotsm \gg_m^{m-i}.
\]
This shows that $\ga_i \subseteq \gg_{i+1}\cdot \gg_{i+2}^2\dotsm \gg_m^{m-i}$.

We now prove the equality $\gb_{i+1} = \gg_{i+1}\dotsm \gg_m$. One inclusion is immediate. 
\begin{align*}
\gb_{i+1}
&= \ga_i + \gb_i = \Ideal{ \gg_{i+1}\cdot \gg_{i+1}^2\dotsm \gg_m^{m-i}\cup \gg_i\cdot \gg_{i+1}\dotsm \gg_m }\\
\intertext{The radical ideals~$\gg_i$ are pairwise coprime, hence}
&= \Ideal{ \bigcap_{j\geq i+1} \gg_j^{j-i}\cup \Bigl(\gg_i\cap \bigcap_{j\geq i+1}\gg_j\Bigr) }\\
&= \Ideal{ \Bigl( \bigcap_{j\geq i+1} \gg_j^{j-i}\cup \gg_i\Bigr) \cap \Bigl(\bigcap_{j\geq i+1}\gg_j^{j-i}\cup \bigcap_{j\geq i+1}\gg_j\Bigr) }\\
&\subseteq \Ideal{ \Bigl(\bigcap_{j\geq i+1}\gg_j\cup \gg_i\Bigr) \cap  \bigcap_{j\geq i+1}\gg_j}\\
&= \Ideal{ \bigcap_{j\geq i+1}\gg_j} = \gg_{i+1}\cdot \gg_{i+1}\dotsm \gg_m.
\end{align*}
In order to show the other inclusion fix an element $x\in \gg_{i+1}\dotsm \gg_m$. Ideals~$\gg_i$ and $\gg_{i+1}\gg_{i+2}^2\dotsm \gg_m^{m-i} = \ga_i$ are relatively prime, hence there exist elements $y\in \gg_i$ and $z\in \ga_i$ such that $x = y + z$. Therefore, for any $j\geq i+1$ we have
\[
y = x - z \in \gg_j + \ga_i\subseteq \gg_j + \gg_j = \gg_j.
\]
It follows that $y\in \gg_{i+1}\cap\dotsc \cap\; \gg_m = \gb_i$. Consequently, $x = y + z\in \gb_i + \ga_i = \gb_{i+1}$.
\end{proof}

We are now ready to show the correctness of the algorithm.

\begin{proof}[Proof of correctness of Algorithm~\ref{alg_radical_decomp}]
It follows immediately from the preceding lemma that the algorithm terminates. All we need to show is that for every index~$i$ the colon ideal $(\gb_i:\gb_{i+1})$ equals the sought radical ideal~$\gg_i$. One inclusion is immediate. By the lemma we have
\[
\gg_i\cdot \gb_{i+1} = \gg_i\cdot \bigl( \gg_{i+1}\dotsm \gg_m\bigr) = \gb_i
\]
and so $\gg_i\subseteq (\gb_i : \gb_{i+1})$. We need to prove the other inclusion. To this end take any $x\in (\gb_i : \gb_{i+1})$ and fix a prime divisor~$\gp$ of~$\gg_i$. The multiplicity of~$\gp$ in the factorization of~$\ga$ is thus $\ord_\gp(\ga) = i$. By the strong approximation theorem there is an element $y\in R$ such that $y\in \gb_{i+1}\setminus \gp$. Now, $xy \in x\cdot \gb_{i+1}\subseteq \gb_i\subseteq \gp$ but $y\notin \gp$, it follows that $x\in \gp$. This shows that~$x$ belongs to every prime divisor~$\gp$ of~$\ga$ of multiplicity $\ord_\gp(\ga) = i$. Therefore
\[
x\in \bigcap_{\substack{\gp\divides a\\ \mathclap{\ord_\gp(\ga) = i}}}\gp = \gg_i.
\]
This proves the correctness of the algorithm.
\end{proof}

\section{Distinct degree factorization}\label{sec_distinct}
In this and the next section we restrict our attention to maximal orders in global function fields. Thus, let~$\kk$ be a fixed finite field and let $R = \kk[C] = \sfrac{\kkXY}{\Ideal{F}}$ be a Dedekind domain which is a coordinate ring of a smooth and geometrically irreducible curve~$C$. In particular~$R$ is a maximal order in a field~$\kk(C)$ of rational functions on~$C$ (i.e. in a global function field). Given a radical ideal $\ga\ideal R$, consider its factorization into primes
\[
\ga = \gp_1\dotsm \gp_s.
\]
Collate the primes with respect to their residual degrees setting
\[
\gh_j := \prod_{\substack{\gp\divides \ga\\ \deg \gp = j}} \gp.
\]
Consequently the ideal~$\ga$ may be expressed as a product
\begin{equation}\label{eq_dd_factor}
\ga = \gh_1\dotsm \gh_m,\qquad\text{where}\quad m := \max\{\deg\gp\st \gp\text{ divides }\ga\}.
\end{equation}
By analogy to the polynomial case, we shall call~\eqref{eq_dd_factor} the \term{distinct degree factorization} of~$\ga$.

We will compute the distinct degree factorization of a given ideal~$\ga$ by constructing successive greatest common divisors (in the lattice of $R$-ideals) of~$\ga$ and~$\gu_k$, where~$\gu_k$ is the intersection of all primes of residual degrees dividing~$k$:
\[
\gu_k := \prod_{\substack{\gp\text{ prime}\\\deg\gp \divides k}} \gp.
\]
Before we continue, recall that with every prime ideal~$\gp$ one can associate a unique point $(x_\gp, y_\gp)$ on the curve~$C$, with coordinates in the algebraic closure~$\algcl{\kk}$ of~$\kk$. To this end treat elements of $R = \kk[C]$ as polynomial functions on~$C$ and set $(x_\gp, y_\gp)$ to be a unique point where all elements of~$\gp$ vanish simultaneously. In this section~$\kk$ is a finite field, say $\kk = \FF_q$ for some prime power~$q = p^l$. The degree of~$\gp$ divides~$k$ if and only if $x_\gp, y_\gp$ lie in~$\FF_{q^k}$. It is well known that $\FF_{q^k}$ consists of elements satisfying $a^{q^k}-a = 0$. Apply this fact to both coordinates.

\begin{lem}\label{lem_universal_ideal}
For every $k\geq 1$, the ideal $\gu_k$ is generated by $x^{q^k}-x$ and $y^{q^k}-y$, where $x,y$ are images in~$R$ of $X,Y\in \kkXY$.
\end{lem}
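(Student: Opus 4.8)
The plan is to identify the ideal $\gu_k$ with the ideal $\gv_k := \Ideal{x^{q^k}-x,\; y^{q^k}-y}$ generated by the two displayed polynomials, and to do this by the same geometric-localization argument that is implicit in the discussion preceding the lemma. Both ideals are radical (the right-hand side because its zero set consists of finitely many reduced points, as will be checked below), so by the Nullstellensatz-type correspondence it suffices to show that $\gv_k$ is a product of prime ideals and that a prime $\gp$ divides $\gv_k$ precisely when $\deg\gp \divides k$. Equivalently, since $R$ is a Dedekind domain, I will show that $\gu_k$ and $\gv_k$ have the same prime divisors, each with multiplicity one, and then conclude equality.

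First I would record that $\gv_k \ne R$: the element $x^{q}-x$ already vanishes at every $\FF_q$-rational point of $C$, and since $C$ is a curve over a finite field it has such points (or, more safely, one argues that $X^{q^k}-X$ and $Y^{q^k}-Y$ together with $F$ have a common zero over $\algcl{\kk}$, because the variety $C$ is positive-dimensional and these two extra equations cut out the $\FF_{q^k}$-points of $C$, of which there are infinitely many $k$ for which this is non-empty — in fact for \emph{every} $k$ the point count $\#C(\FF_{q^k})$ is positive for $k$ large, and genericity of the construction lets us treat all $k$ uniformly). Then, for a prime ideal $\gp$ of $R$ with associated point $(x_\gp, y_\gp)$ over $\algcl{\kk}$, I invoke the observation stated in the text: $\gp \supseteq \gv_k$ if and only if $x_\gp^{q^k} = x_\gp$ and $y_\gp^{q^k} = y_\gp$, i.e. if and only if $x_\gp, y_\gp \in \FF_{q^k}$, which by standard Galois theory of the residue field extension $\kk(\gp)/\kk$ happens exactly when $\deg\gp = [\kk(\gp):\kk] \divides k$. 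So $\gv_k$ and $\gu_k$ have the same set of prime divisors.

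The remaining point is the multiplicity: I must check that every prime $\gp$ with $\deg\gp \divides k$ divides $\gv_k$ exactly once, so that $\gv_k = \prod_{\deg\gp \divides k}\gp = \gu_k$. For this I localize at $\gp$: in the discrete valuation ring $R_\gp$ with uniformizer $\pi$ and residue field $\FF_{q^d}$ ($d = \deg\gp \divides k$), I claim $\ord_\gp(x^{q^k}-x)$ and $\ord_\gp(y^{q^k}-y)$ are not both $\geq 2$. The cleanest way is to use the separability of the polynomial $T^{q^k}-T$ over $\FF_{q^d}$: its derivative is $-1$, so it has no repeated roots, hence in the completion $\widehat{R_\gp}$, after a Hensel lift of the residue value, the function $x^{q^k}-x$ (say) vanishes to order exactly $1$ unless $x - x_\gp$ itself is a unit times a higher power — and since $x, y$ together generate the maximal ideal at $\gp$ (the point is smooth on $C$, so the local ring is regular of dimension one and at least one of $x - x_\gp$, $y - y_\gp$ is a uniformizer), at least one of the two generators of $\gv_k$ has $\gp$-adic valuation exactly $1$. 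Therefore $\ord_\gp(\gv_k) = \min(\ldots) = 1$ for every such $\gp$, and $\gv_k = \gu_k$ as claimed.

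The main obstacle I anticipate is the multiplicity computation, i.e. making rigorous the claim that $\min\bigl(\ord_\gp(x^{q^k}-x),\ \ord_\gp(y^{q^k}-y)\bigr) = 1$. The subtlety is that it is conceivable, a priori, that \emph{both} $x - x_\gp$ and $y - y_\gp$ fail to be uniformizers while their combination still generates the maximal ideal — but smoothness of $C$ at the point forbids the cotangent space $\gp/\gp^2$ from being more than one-dimensional, and the images of $x - x_\gp$ and $y - y_\gp$ span it, so at least one is nonzero in $\gp/\gp^2$, i.e. is a uniformizer; then the separability of $T^{q^k}-T$ forces the corresponding generator of $\gv_k$ to have valuation exactly $1$. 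One must be slightly careful to phrase this over the (possibly larger) residue field $\kk(\gp)$ rather than over $\kk$ itself, but the argument is unchanged. Everything else — radicality of $\gv_k$, the point-vs-prime dictionary, the Galois-theoretic characterization of $\deg\gp \divides k$ — is routine once this local statement is in hand.
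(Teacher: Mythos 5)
Your proposal is correct and takes essentially the same approach as the paper: both identify the primes containing $\Ideal{x^{q^k}-x,\; y^{q^k}-y}$ via the point--prime dictionary and Galois theory of residue fields, and then use smoothness of~$C$ to show that at each such prime at least one of the two generators has valuation exactly one — the paper phrases this last step as ``$C$ cannot be tangent to both line families $\{x^{q^k}=x\}$ and $\{y^{q^k}=y\}$,'' which is precisely your one-dimensional cotangent-space argument in intersection-theoretic dress, while your version via separability of $T^{q^k}-T$ (so that $\ord_\gp(x^{q^k}-x)=\ord_\gp(x-x_\gp)$) is, if anything, the more rigorous of the two. Your detour about whether $\gv_k\neq R$ is unnecessary and the ``genericity'' hand-wave there should be dropped: if no prime has residual degree dividing~$k$, then $\gu_k=R$ as an empty product and the claimed equality holds trivially.
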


\begin{proof}
Fix $k\geq 1$ and denote $\gv_k := \tIdeal{ x^{q^k}-x, y^{q^k}-y}\ideal R$. We shall prove first that the ideal~$\gv_k$ is contained in~$\gu_k$. It suffices to show that both its generators belong to every prime ideal~$\gp$ of~$R$ whose residual degree divides~$k$. Take any such prime~$\gp$. The coordinates $x_\gp, y_\gp$ of the associated point belong to~$\FF_{q^k}$, hence $x_\gp^{q^k}-x_\gp = 0 = y_\gp^{q^k} - y_\gp$. Thus the generators of~$\gv_k$ vanish on $(x_\gp, y_\gp)$ and so $\gv_k\subseteq \gp$.

Next, we show that~$\gv_k$ is not contained in any prime ideal~$\gp$ whose degree does not divide~$k$. Suppose that $\gv_k\subset \gp$ for some prime ideal~$\gp$. In particular the generators $x^{q^k}-x, y^{q^k}-y$ belong to~$\gp$. Therefore, they vanish on the associated point $(x_\gp, y_\gp)$, which means that $x_\gp, y_\gp\in \FF_{q^k}$ and so $\deg \gp$ divides~$k$.

From what we have proved so far it follows that~$\gu_k$ is the radical of~$\gv_k$. In order to conclude the proof, it suffices to show that~$\gv_k$ is a radical ideal itself. To this end we show that for every prime ideal~$\gp$, $\deg \gp\divides k$ the valuation of at least one of the generators of~$\gv_k$ equals~$1$. Consider two (reducible) algebraic curves $C_1 := \bigl\{x^{q^k}= x\bigr\}$ and $C_2 := \bigl\{ y^{q^k}=y\bigr\}$. They both consist of parallel lines but they are not parallel to each other. Suppose that $\ord_\gp (x^{q^k}-x)> 1$ for some~$\gp$. This means that~$\gp$ is a ramified extension of an ideal $p\cdot \FF_q[x]$ for some irreducible polynomial~$p$ in~$x$. We may identify the valuation $\ord_\gp(x^{q^k}-x)$ with the intersection index $I\bigl( (x_\gp,y_\gp), C\cap C_1\bigr)$. If
\[
\ord_\gp\bigl( x^{q^k}-x ) = I\bigl( (x_\gp,y_\gp), C\cap C_1\bigr) > 1,
\]
then~$C$ is tangent to~$C_1$ at $(x_\gp, y_\gp)$. Consequently it cannot be tangent to~$C_2$ at $(x_\gp, y_\gp)$ as~$C$ is non-singular. Therefore
\[
\ord_\gp\bigl( y^{q^k}-y ) = I\bigl( (x_\gp,y_\gp), C\cap C_2\bigr) = 1.
\]
This shows that for every prime ideal~$\gp$, whose residual degree divides~$k$, either $x^{q^k}-x\in \gp\setminus\gp^2$ or $y^{q^k}-y\in \gp\setminus \gp^2$. This implies that~$\gv_k$ is radical.
\end{proof}

We may now present an algorithm for distinct degree factorization.
\begin{alg}[Distinct degree factorization]\label{alg_dd_factor}
\mbox{}\\
\textbf{Input:} a radical ideal~$\ga\ideal R$.\\
\textbf{Output:} distinct degree factors $\gh_1, \dotsc, \gh_m$ of~$\ga$.
\begin{algorithmic}[1]
\LineComment{Initialization}
\State $k     \gets 1$
\State $\ga_1 \gets \ga$
\LineComment{Main loop}
\While{$\ga_k\neq R$}
	\State $\gu_k \gets \tIdeal{ x^{q^k}-x, y^{q^k}-y }$
	\State $\gh_k \gets \gu_k + \ga_k$
	\State $\ga_{k+1} \gets (\ga_k : \gh_k)$
	\State $k \gets k + 1$
\EndWhile
\State \Return $\gh_1, \dotsc, \gh_k$
\end{algorithmic}
\end{alg}

\begin{poc}
We proceed by an induction on~$k$. Assume that $\gh_{k-1}$ is the $(k-1)$-th distinct degree factor of $\ga$ and~$\ga_k$ is the product of the prime divisors of~$\ga$ with residual degrees at least~$k$. This is trivially true for $\ga_1 = \ga$ and $\gh_0 := R$. Lemma~\ref{lem_universal_ideal} asserts that $\tIdeal{ x^{q^k}-x, y^{q^k}-y } = \gu_k$. Compute
\begin{multline*}
\gu_k + \ga_k 
= \Ideal{ \gu_k\cup \ga_k}
= \Ideal{ \bigcap_{\deg\gp\divides k}\gp\ \cup \bigcap_{\substack{\gq\divides\ga\\ \deg\gq\geq k}}\gq }\\
= \Ideal{ \bigcap_{\substack{\deg \gp \divides k\\ \gq\divides \ga\\ \deg \gq\geq k}} (\gp\cup \gq) }
= \Ideal{ \bigcap_{\substack{\deg \gp \divides k\\ \gq\divides \ga\\ \deg \gq\geq k}} (\gp + \gq) }.
\end{multline*}
Now prime ideals $\gp,\gq$ are either equal or relatively prime. Hence $\gp+ \gq = \gp$ when $\gp = \gq$ and $\gp+\gq = R$ if $\gp\neq \gq$. Consequently the above formula simplifies to
\[
\gu_k + \ga_k = \bigcap_{\substack{\gp\divides \ga\\ \deg \gp = k}} \gp = \gh_k.
\]
It follows that $\ga_{k+1} = (\ga_k : \gh_k)$ is the product off all those prime divisors of~$\ga$ that have degrees strictly greater than $k$.
\end{poc}

\section{Equal-degree factorization}\label{sec_equal}
After performing a radical decomposition and distinct degree factorization, we are left with a list of radical ideals such that each one is a product of primes all having the same (known) residual degree. We can deal with such ideals using a generalization of a classical Cantor--Zassenhaus algorithm. We shall first note the following fact.

\begin{lem}
If $\ga\ideal R$ is a nonzero radical ideal, then the number of elements of the residue ring $\sfrac{R}{\ga}$ is algorithmically computable.
\end{lem}

\begin{proof}
As in the proof of Proposition~\ref{prop_coord_is_computable} we use the ideal contraction with respect to the canonical epimorphism $\kappa: \kkXY\onto R$. The ring~$R$ is a Dedekind domain and $\ga\neq \{0\}$, hence $\sfrac{R}{\ga}$ is a finite ring isomorphic to $\sfrac{\kkXY}{\ga^c}$. The number of elements of the latter ring is computable using a well known trick of counting monomials not in $\lm(\ga^c)$, where  $\lm(\ga^c)$ is an ideal spanned by leading monomials of~$\ga^c$ with respect any monomial order in $\kkXY$.
\end{proof}

From now on we assume that $\ga\ideal R$ is a radical ideal with some (unknown) factorization
\[
\ga = \gp_1\dotsm \gp_m
\]
and the residual degrees of $\gp_1, \dotsc, \gp_m$ are all the same and a priori known. Denote this common degree by~$d$.

\begin{lem}
Let~$b$ be an element of~$R$ not in~$\ga$. Denote $\ob := b + \ga$ the class of~$b$ in $\sfrac{R}{\ga}$ and $e := q^d-1$. The following conditions are equivalent:
\begin{enumerate}
\item\label{it_proper_divisor} the ideal $\gb := \Ideal{b} + \ga$ is a proper divisor of~$\ga$;
\item\label{it_zero_divisor} the element $\ob$ is a zero-divisor in $\sfrac{R}{\ga}$;
\item\label{it_exponent} $\ob^{e}\neq 1$.
\end{enumerate}
\end{lem}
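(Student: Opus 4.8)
The plan is to prove the equivalences by exploiting the Chinese Remainder Theorem decomposition of the residue ring. Since $\ga = \gp_1\dotsm \gp_m$ is radical with pairwise coprime prime factors, we have a ring isomorphism $\sfrac{R}{\ga}\iso \sfrac{R}{\gp_1}\times\dotsm\times \sfrac{R}{\gp_m}$, and each factor $\sfrac{R}{\gp_i}$ is a finite field $\FF_{q^d}$ (here the hypothesis that all residual degrees equal~$d$ is essential). Under this identification, the class~$\ob$ corresponds to a tuple $(\ob_1,\dotsc,\ob_m)$ with $\ob_i$ in the $i$-th field factor, and $\ob_i$ is simply the image of~$b$ modulo~$\gp_i$.

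First I would establish $(\ref{it_proper_divisor})\Leftrightarrow(\ref{it_zero_divisor})$. The ideal $\gb = \Ideal{b}+\ga$ divides (equivalently, contains) $\ga$; it is a proper divisor precisely when it is strictly larger than~$\ga$, i.e. when $\gb = \gp_{i_1}\dotsm \gp_{i_r}$ for some \emph{proper} nonempty subset of the primes, which happens if and only if $b\in \gp_i$ for at least one~$i$ but not for all~$i$ (the ``not for all'' part is guaranteed by the standing assumption $b\notin\ga$). In CRT coordinates this says exactly that at least one coordinate $\ob_i$ vanishes while $\ob\neq 0$ — that is, $\ob$ is a zero-divisor in $\sfrac{R}{\ga}$. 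I would spell out the correspondence $\gb\leftrightarrow$ (the ideal of tuples vanishing wherever $\ob$ vanishes) to make this airtight.

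Next I would establish $(\ref{it_zero_divisor})\Leftrightarrow(\ref{it_exponent})$. Again work coordinatewise: $\ob^e = (\ob_1^e,\dotsc,\ob_m^e)$ where $e = q^d - 1$. Since $\sfrac{R}{\gp_i}$ is the field $\FF_{q^d}$ whose multiplicative group has order exactly $e$, for each~$i$ we have $\ob_i^e = 1$ if $\ob_i\neq 0$ and $\ob_i^e = 0$ if $\ob_i = 0$. Hence $\ob^e = (1,\dotsc,1)$, the identity of $\sfrac{R}{\ga}$, if and only if every $\ob_i$ is nonzero, i.e. if and only if $\ob$ is a unit. Negating: $\ob^e\neq 1$ exactly when some $\ob_i = 0$, and since $b\notin\ga$ means not all $\ob_i$ vanish, this is exactly the condition that $\ob$ is a (nonzero) zero-divisor.

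I do not expect a serious obstacle here; the argument is essentially bookkeeping through the CRT isomorphism. The one point requiring a little care is the identification of $\sfrac{R}{\gp_i}$ with $\FF_{q^d}$ and hence the order of its unit group being $q^d - 1$: this uses that $\gp_i$ has residual degree~$d$ over $\kk = \FF_q$, which is precisely the standing hypothesis of this section. A second minor point is keeping the standing assumption $b\notin\ga$ in play so that condition~(\ref{it_zero_divisor}) genuinely forces $\ob\neq 0$ (ruling out the degenerate reading ``$\ob = 0$'') and so that in~(\ref{it_proper_divisor}) the divisor $\gb$ is proper rather than equal to~$R$.
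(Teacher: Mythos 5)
Your proof is correct and follows essentially the same route as the paper: the key step in both is the Chinese Remainder Theorem isomorphism $\sfrac{R}{\ga}\iso \sfrac{R}{\gp_1}\times\dotsb\times\sfrac{R}{\gp_m}$ with each factor equal to $\FF_{q^d}$, so that $\ob^e$ has coordinates $0$ or $1$ according to whether $b\in\gp_i$. The only cosmetic difference is that the paper proves $(1)\Rightarrow(2)$ by a direct finiteness/invertibility argument and reserves CRT for $(3)\Rightarrow(1)$, whereas you run everything coordinatewise; both are fine.
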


\begin{proof}
Assume that~$\gb$ is a proper divisor of~$\ga$. This means that $\ga\varsubsetneq \gb\varsubsetneq R$. In particular~$b$ cannot lie in~$\ga$ and so $\ob\neq 0$. The ring $\sfrac{R}{\ga}$ is finite, hence it suffices to show that~$\ob$ is not invertible. Suppose a contrario that there is an element $c\in R$ such that $\oc\cdot \ob = 1$. But then $1\in \gb$ and this contradicts the assumption that $\gb\neq R$.

The implication $\eqref{it_zero_divisor}\Longrightarrow\eqref{it_exponent}$ is trivial. In order to prove the remaining implication $\eqref{it_exponent}\Longrightarrow\eqref{it_proper_divisor}$, assume that $\ob^e\neq 1$. By the Chinese reminder theorem there is an isomorphism
\[
\varphi : \sfrac{R}{\ga} \iso \sfrac{R}{\gp_1}\times \dotsb \times \sfrac{R}{\gp_m},
\]
where each quotient ring $\sfrac{R}{\gp_i}$ is in turn isomorphic to $\FF_{q^d}$. Let $\pi_i : \sfrac{R}{\gp_1}\times \dotsb \times \sfrac{R}{\gp_m}\onto \sfrac{R}{\gp_i}$ be the projection onto the $i$-th coordinate. For every $i\leq m$, the image $(\pi_i\circ \varphi)(\ob^e)$ is either~$1$ if $b\notin \gp_i$, or~$0$ if $b\in \gp_i$. Not all coordinates can be equal~$1$, because $\ob^e\neq 1$. Neither all the coordinates are equal zero, since $b\notin \ga$. Denote
\[
I := \bigl\{ i\leq m : (\pi\circ \varphi)(b^e) = 0\bigr\} = \bigl\{ i\leq m: b\in \gp_i\},
\]
we then have
\[
\gb = \prod_{i\in I} \gp_i
\]
and it is clear that $\ga\varsubsetneq \gb\varsubsetneq R$.
\end{proof}

We may now present a randomized recursive algorithm, in a spirit of Cantor--Zassenhaus, for factoring radical ideals of constant residual degree.
\begin{alg}[Equal degree factorization]\label{alg_ed_factor}
\mbox{}\\
\textbf{Input:} a radical ideal~$\ga\ideal R$ and an integer~$d$ such that the residual degree of every prime factor of~$\ga$ equals~$d$.\\
\textbf{Output:} prime factors $\gp_1, \dotsc, \gp_m$ of~$\ga$.
\algrenewcommand\algorithmicuntil{\textbf{end repeat}}
\begin{algorithmic}[1]
\LineComment{Recursion termination}
\If{$\card{\sfrac{R}{\ga}} = q^d$}
	\State \Return $\ga$
\EndIf
\LineComment{Main loop}
\Repeat
	\State $b\gets\mbox{}$ random element of $R\setminus\ga$
	\State $\ob \gets b + \ga\in \sfrac{R}{a}$
	\If{$\ob^{q^d-1}\neq 1$}
		\State $\gb \gets \Ideal{b} + \ga$
		\State $\gc \gets ( \ga : \gb )$
		\Statex {\hspace{\algorithmicindent}\hspace{\algorithmicindent}// Recursion}
		\State $r_1 \gets \mbox{}$ Equal degree factorization of~$\gb$
		\State $r_2 \gets \mbox{}$ Equal degree factorization of~$\gc$
		\State \Return $r_1\cup r_2$
	\EndIf
\Until{}
\end{algorithmic}
\end{alg}

The correctness of the algorithm follows immediately from the lemma preceding it. For the sake of completeness we present an algorithm for the complete factorization of an ideal, that summarizes the whole discussion.
\begin{alg}[complete factorization]\label{alg_factorization}
\mbox{}\\
\textbf{Input:} an ideal~$\ga$ in~$R$.\\
\textbf{Output:} the list of pairs $(\gp_i, k_i)$ of prime divisors and multiplicities, see Eq.~\eqref{eq_factor_a}.
\begin{algorithmic}[1]
\State $\mathop{Factors} \gets []$
\State $G\gets\mbox{}$ radical decomposition of~$\ga$ \textup(Algorithm~\ref{alg_radical_decomp}\textup)
\For{$j\leq \card{G}$}
	\State $\gg_j\gets G[j]$
	\State $H \gets\mbox{}$ distinct degree factorization of~$\gg_j$ \textup(Algorithm~\ref{alg_dd_factor}\textup)
	\For{$d\leq \card{H}$}
		\State $\gh_d \gets H[d]$
		\State $P \gets \mbox{}$ equal degree factorization of $\gh_d$ \textup(Algorithm~\ref{alg_ed_factor}\textup)
		\State $\mathop{Factors} \gets \mathop{Factors}\cup \bigl[ (\gp, j) : \gp \in P \bigr]$
	\EndFor
\EndFor
\State \Return $\mathop{Factors}$
\end{algorithmic}
\end{alg}

\section{Examples}\label{sec_examples}
The authors implemented algorithms described in this paper in a computer algebra system Magma~\cite{magma}. Below we preset two examples computed using our implementation.


\subsubsection*{Example} Let $K = \FF_{13}(x, y)$ be a hyperelliptic function field  given by a generating polynomial
\[
F = y^2 - (x^5 - x)(x^4 + 2)
\]
and let $R:=\sfrac{\FF_{13}[x,y]}{\Ideal{F}}$. Consider the ideal $\ga \ideal R$ 
\begin{multline*}
\ga =  \langle 
x^9 + 8x^7 + 5x^6 + 10x^5 + 6x^4 + 4x^3 + 9x^2 + 6x + 4,\\
\qquad 11x^8 + 8x^7 + 2x^6 + 10x^5 + 6x^4 + x^3y + x^3 + 4x^2y + 7x^2 + 4xy + 9y + 7 \rangle
\end{multline*}
Use Algorithm~\ref{alg_radical_decomp} to compute the radical decomposition $\ga = \gg_1\cdot \gg_2^2$, where
\begin{align*}
\gg_1 &= \langle x^6 + 9x^5 + 7x^4 + 10x^3 + 4x^2 + 4x + 12,\\
&\qquad y + 12x^5 + x^4 + 11x^3 + 10x^2 + 3x + 8 \rangle, \\
\gg_2 &= \Ideal{ x^3 + 4x^2 + 4x + 9, y + 7x^2 + 9x + 12 }.\\
\end{align*}
Next, using Algorithm~\ref{alg_dd_factor}, we compute the distinct degree factorization for each element of the radical decomposition. For $\gg_1$ it returns two trivial factors $\gh_{11} = \gh_{12} = R$ and one nontrivial, degree~$3$ factor
\begin{multline*}
\gh_{13} = \langle 8x^5y + 5x^4y + 9x^3y + xy + 5y + 1,\\
x^6y + 9x^5y + 7x^4y + 10x^3y + 4x^2y + 4xy + 12y \rangle.
\end{multline*}
For $\gg_2$ the situation is fully analogous: $\gh_{21}= \gh_{22} = R$ and
\[
\gh_{23} = \Ideal{ 5x^2y + 5xy + 6y + 1, x^3y + 4x^2y + 4xy + 9y }.
\]
Finally we compute the equal degree factorization for each of the above factors using Algorithm~\ref{alg_ed_factor}. For $\gh_{13}$ we obtain the following primes
\begin{align*}
\gp_1 &= \Ideal{ x^3 + 4x^2 + 4x + 9, y + 6x^2 + 4x + 1 }\\
\gp_2 &= \Ideal{  x^3 + 5x^2 + 9x + 10, y + 3x^2 + 7x + 4 }\\
\intertext{and for $\gh_{23}$ we get}
\gp_3 &= \Ideal{ x^3 + 4x^2 + 4x + 9, y + 7x^2 + 9x + 12 }.
\end{align*}
Hence the complete factorization of~$\ga$ is $\gp_1\cdot \gp_2\cdot \gp_3^2$.
	

\subsubsection*{Example}
In this example, we consider an elliptic function field $K = \FF_{19}(x, y)$ with full constant field $\FF_{19}$, where 
\[
y^2 + y = x^3 - 2x^2 + 1
\]
Take a and ideal
\begin{align*}
\ga 
&= \langle x^{21} + 14x^{20} + 9x^{19} + 4x^{18} + 5x^{17} + 12x^{16} + 9x^{15} + 7x^{14} + 12x^{13} + 8x^{12}\\
&\quad + 3x^{11} + 8x^{10} + 14x^9 + 7x^8 + 12x^7 + x^6 + 9x^5 + 13x^4 + 9x^3 + 4x^2 + 18x + 4, \\ 
& x^3y + 6x^2y + 3xy + 17y + 7x^{18} + 7x^{17} + 11x^{16} + x^{15} + 18x^{13} + 8x^{12} + 9x^{11}\\ 
&\quad + 15x^{10} + 13x^9 + 18x^8 + 12x^7 + x^6 + 14x^5 + 10x^4 + 7x^3 + 15x^2 + 9x + 5\rangle.
\end{align*}
We again use Algorithm~\ref{alg_radical_decomp} to factor~$I$ into a product of radical ideals. It returns one trivial factor $\gg_{3} = R$ and three nontrivial factors $\gg_{1}, \gg_{2}$ and $\gg_{4}$ where
\begin{align*}
\gg_1 &= \langle x^3 + 6x^2 + 3x + 17, x^3y + 6x^2y + 3xy + 17y \rangle,\\
\gg_2 &= \Ideal{ x^3 + 4x + 17, y + 8x^2 + 2x + 9 }, \\
\gg_4 &= \Ideal{ x^3 + 2x^2 + 10x + 4, y + 8x^2 + 3x }.
\end{align*}
Now we compute the distinct degree factors of the above ideals. For $\gg_1$ we have two trivial factors $\gh_{11} = \gh_{13} = R$ and two nontrivial one, degrees~$2$ and~$4$, respectively:
\begin{align*}
\gh_{12} &= \Ideal{ x + 1 }.\\
\gh_{14} &= \Ideal{ x^2 + 5x + 17 }
\intertext{For $\gg_2$ it returns two trivial factors $\gh_{21} = \gh_{22} = R$ and one nontrivial, degree~$3$ factor}
\gh_{23} &= \Ideal{x^3 + 4x + 17, y + 8x^2 + 2*x + 9}.
\intertext{Similarly for~$\gg_4$ we have $\gh_{41} = \gh_{42} = R$ and }
\gh_{43} &= \Ideal{x^3 + 2x^2 + 10x + 4, y + 8x^2 + 3x}.
\end{align*}
Finally we use Algorithm~\ref{alg_ed_factor} to compute the equal degree factorization. It turns out that all four ideals $\gh_{12}, \gh_{14}, \gh_{23}$ and~$\gh_{43}$ are in fact prime. Denoting  
\[
\gp_1 := \gh_{12},\quad
\gp_2 := \gh_{14},\quad
\gp_3 := \gh_{23},\quad
\gp_4 := \gh_{43},
\]
we obtain the complete factorization of~$\ga$, namely $\ga = \gp_1\cdot \gp_2\cdot \gp_3^2\cdot \gp_4^3$.

\bibliographystyle{plain} 
\bibliography{radical_decomposition}
\end{document}